\newcommand{\Z}{{\mathbb{Z}}}
\newcommand{\N}{{\mathbb{N}}}
\newcommand{\I}{{\mathbb{I}}}
\newcommand{\R}{{\mathbb{R}}}
\newcommand{\E}{{\mathbb{\,E\,}}}
\newcommand{\eps}{\varepsilon}
\newcommand{\prob}[1]{\mathbb{P}\left\{#1\right\}}
\newcommand{\Ind}{\mathbb{I}}
\newcommand{\half}{ {\textstyle \frac 1 2 }}
\title{Optimal Probability Inequalities for Random Walks Related to Problems in Extremal Combinatorics}
\author{D.\,Dzindzalieta \footnotemark[2]\ \footnotemark[5] \and  T.\,Ju\v{s}kevi\v{c}ius \footnotemark[3] \and M.\,\v{S}ileikis \footnotemark[4]\ \footnotemark[5] }
\begin{document}
\maketitle

\renewcommand{\thefootnote}{\fnsymbol{footnote}}

\footnotetext[2]{Vilnius University Institute of Mathematics and Informatics, Akademijos 4, LT-08663, Vilnius, Lithuania (dainiusda@gmail.com)}
\footnotetext[3]{Department of Mathematical Sciences, University of Memphis, Memphis, TN 38152-3240, USA (tomas.juskevicius@gmail.com)}
\footnotetext[4]{Department of Discrete Mathematics, Adam Mickiewicz University, Umultowska 87, 61-614 Pozna\'{n}, Poland (matas.sileikis@gmail.com)}
\footnotetext[5]{This research was funded by a grant (No. MIP-47/2010) from the Research Council of Lithuania}
\begin{center}
Dedicated to the memory of V.\,Bentkus
\end{center}
\vspace{8pt}

\begin{abstract}
  Let $S_n=X_1+\cdots+X_n$ be a sum of independent symmetric random variables such that $\left|X_{i}\right|\leq 1$. Denote by $W_n=\varepsilon_{1}+\cdots+\varepsilon_{n}$ a sum of independent random variables such that $\prob{\eps_i = \pm 1} = 1/2$.
We prove that
\begin{equation*}
  \mathbb{P}\left\{S_{n} \in A\right\} \leq \mathbb{P}\left\{cW_k \in A\right\},
\end{equation*}
where $A$ is either an interval of the form $\left[x, \infty \right)$ or just a single point. The inequality is exact and the optimal values of $c$ and $k$ are given explicitly. It improves Kwapie\'{n}'s inequality in the case of the Rademacher series. We also provide a new and very short proof of the Littlewood-Offord problem without using Sperner's Theorem. Finally, an extension to odd Lipschitz functions is given.
 
\end{abstract}

\begin{keywords}
  Concentration inequalities, intersecting families, random walks, tail probabilities.
\end{keywords}

\begin{AMS}
  60E15, 05D05.
\end{AMS}

\section{Introduction}

Let $S_{n}=X_1+\cdots+X_n$ be a sum of independent random variables $X_i$ such that
\begin{equation}
  \label{salygos}
\left|X_i\right| \leq 1 \quad \text{and} \quad \E X_i=0.
\end{equation}
Let $W_n = \varepsilon_{1}+\cdots+\varepsilon_n$ be the sum of independent {Rademacher random variables}, i.e., such that $\prob{\eps_i = \pm 1} = 1/2$. We will refer to $W_n$ as a simple random walk with $n$ steps. 

By a classical result of Hoeffding \cite{hoeffding} we have the estimate
\begin{equation}
\mathbb{P}\left\{S_n \geq x\right\} \leq \exp\left\{-x^2/2n\right\}, \quad x\in \R.
\label{h}
\end{equation} 
If we take $S_n=W_n$ on the left-hand side of \eqref{h}, then in view of the Central Limit Theorem we can infer that the exponential function on the right-hand side is the minimal one. Yet a certain factor of order $x^{-1}$ is missing, since $\Phi(x)\approx (\sqrt{2\pi}x)^{-1}\exp\left\{-x^2/2\right\}$ for large $x$.

Furthermore, it is possible to show that the random variable $S_n$ is sub-gaussian in the sense that 
\begin{equation*}
\mathbb{P}\left\{S_n \geq x\right\} \leq  c\, \mathbb{P}\left\{\sqrt{n}Z \geq x\right\}, \quad x\in \R,
\end{equation*} 
where $Z$ is the standard normal random variable and $c$ is some explicit positive constant (see, for instance, \cite{b07}).

Perhaps the best upper bound for $\mathbb{P}\left\{S_n \geq x\right\}$ was given by Bentkus \cite{b01}, which for integer $x$ is optimal for martingales with differences $X_i$ satisfying \eqref{salygos}. 

Although there are numerous improvements of the Hoeffding inequality, to our knowledge there are no examples where the exact bound for the tail probability is found. In this paper we give an optimal bound for the tail probability $\mathbb{P}\left\{S_n \geq x\right\}$ under the additional assumption of symmetry. 

We henceforth reserve the notation $S_n$ and $W_n$ for random walks with symmetric steps satisfying \eqref{salygos} and a simple random walk with $n$ steps respectively. 
\begin{theorem}\label{thm1}
For $x>0$ we have
\begin{equation}\label{eq1}
\mathbb{P}\left\{S_n \geq x\right\} \leq \left\{ \begin{gathered}
\mathbb{P}\left\{W_n \geq x\right\} \quad \quad \text{if}\,\,\, \left\lceil x\right\rceil+n \in 2\Z, \hfill \\
\mathbb{P}\left\{W_{n-1} \geq x\right\}\quad \text{if}\,\,\, \left\lceil x\right\rceil+n \in 2\Z+1.  \hfill \\
\end{gathered} \right.
\end{equation}
\end{theorem}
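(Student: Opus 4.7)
I would prove this by induction on $n$. The base case $n=1$ is direct: for symmetric $X_1$ with $|X_1|\le 1$ and $x>0$, $\P\{X_1\ge x\}\le \tfrac{1}{2}$, matching $\P\{W_1\ge x\}=\tfrac{1}{2}$ for $x\in(0,1]$ and $\P\{W_0\ge x\}=0$ for $x>1$, with the parity condition selecting the correct case.

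For the inductive step, I would decompose $X_n=V\eta$ with $V:=|X_n|\in[0,1]$ independent of the Rademacher sign $\eta$, so that
\begin{equation*}
  \P\{S_n\ge x\} \;=\; \E_V\!\left[\tfrac{1}{2}\P\{S_{n-1}\ge x-V\}+\tfrac{1}{2}\P\{S_{n-1}\ge x+V\}\right].
\end{equation*}
Applying the induction hypothesis to each conditional probability yields $\P\{S_{n-1}\ge x\pm V\}\le\P\{W_{m_\pm(V)}\ge x\pm V\}$, with $m_\pm(V)\in\{n-1,n-2\}$ chosen so that $m_\pm(V)+\lceil x\pm V\rceil$ is even. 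Setting $k:=\lceil x\rceil$, the parameter $V\in[0,1]$ splits at $V=k-x$ and $V=x-k+1$ into sub-intervals on which $(m_-,m_+)$ is constant. On each sub-interval it remains to verify the pointwise inequality
\begin{equation*}
  \tfrac{1}{2}\P\{W_{m_-}\ge x-V\}+\tfrac{1}{2}\P\{W_{m_+}\ge x+V\}\;\le\;\P\{W_{n^*}\ge x\},
\end{equation*}
where $n^*\in\{n,n-1\}$ matches the parity of $k$ as in the theorem; this reduces to elementary identities coming from the one-step recursion $\P\{W_m\ge j\}=\tfrac{1}{2}\P\{W_{m-1}\ge j-1\}+\tfrac{1}{2}\P\{W_{m-1}\ge j+1\}$ and the monotonicity $\P\{W_m\ge j+2\}\le\P\{W_m\ge j\}$, the representative case being $\P\{W_{n-2}\ge k+1\}\le\P\{W_{n-1}\ge k\}$.

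The main obstacle I expect is that for non-integer $x$ in the middle sub-interval where $\lceil x-V\rceil$ and $\lceil x+V\rceil$ differ by one, a separate application of the induction hypothesis to the two tail probabilities is lossy: the two maximizing distributions of $S_{n-1}$ need not coincide, so the naive estimate can strictly exceed $\P\{W_{n^*}\ge x\}$. To close this gap one must either strengthen the induction to a joint statement controlling the pair of tails simultaneously, or prove directly that the supremum of $\P\{\sum v_i\eta_i\ge x\}$ over weight vectors $v\in[0,1]^n$ is attained at some vertex $v\in\{0,1\}^n$; the latter reduces the optimization to $\max_{m\le n}\P\{W_m\ge x\}$, which equals $\P\{W_{n^*}\ge x\}$ by the same parity and monotonicity considerations.
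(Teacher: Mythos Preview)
Your inductive strategy matches the paper's first proof, and the obstacle you flag is real: conditioning on an arbitrary $V\in[0,1]$ and applying the induction hypothesis separately to $x-V$ and $x+V$ can indeed overshoot the target when the two ceilings differ. Neither of the fixes you sketch is carried out, so as written the argument has a genuine gap.

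The missing idea, and the paper's way around your obstacle, is a scaling trick. First one reduces (exactly as you do, via $X_i=|X_i|\eps_i$ and conditioning) to sums $S_n=\sum_i a_i\eps_i$ with deterministic $0\le a_i\le 1$; this is Lemma~\ref{reduction}. Then, before inducting, one observes that if the largest coefficient $a_n<1$ then $\P\{S_n\ge x\}=\P\{S_n\ge a_n\cdot(x/a_n)\}\le\P\{(S_n/a_n)\ge x\}$, and $S_n/a_n$ again has all coefficients in $[0,1]$ with the largest equal to $1$. Hence one may assume $a_n=1$ without loss of generality. Now the recursion splits cleanly at $x\pm1$, and since $\I(x-1,n)=\I(x+1,n)=\I(x,n+1)$ the two parity cases stay aligned and the induction step becomes a one-line identity; your whole ``middle sub-interval'' disappears. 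The paper also treats $x\in(0,1]$ as a base case valid for \emph{all} $n$ (using symmetry to get $\P\{S_n\ge x\}\le\tfrac12$), which avoids ever landing at nonpositive arguments.

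Your proposed alternative, showing that the supremum over $v\in[0,1]^n$ of $\P\{\sum v_i\eps_i\ge x\}$ is attained at a vertex of $\{0,1\}^n$, would also close the gap, but that statement is essentially equivalent in difficulty to the theorem itself. The scaling reduction to $a_n=1$ is the efficient move.
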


The latter inequality can be interpreted by saying that among bounded random walks the simple random walk is the most stochastic. 

Kwapie\'{n} proved (see \cite{sztencel}) that for arbitrary i.i.d. symmetric random variables $X_i$ and real numbers $a_i$ with absolute value less than $1$ we have
\begin{equation*}
\mathbb{P}\left\{a_1X_1+\ldots+a_nX_n \geq x\right\} \leq 2\, \mathbb{P}\left\{X_1+\ldots+X_n \geq x\right\}, \quad x>0.
\end{equation*}  
The case $n=2$ with $X_i = \eps_i$ shows that the constant $2$ cannot be improved.
 
Theorem \ref{thm1} improves Kwapie\'{n}'s inequality for Rademacher sequences. We believe that using the inequality in \eqref{eq1} with some conditioning arguments leads to better estimates for arbitrary symmetric random variables $X_i$ under the assumptions of Kwapie\'{n}'s inequality, but we will not go into these details in this paper. 

We also consider the problem of finding the quantity
\begin{equation*}
\sup_{S_n} \mathbb{P}\left\{S_n = x\right\},
\end{equation*}
which can be viewed as a non-uniform bound for the concentration of the random walk $S_n$ at a point.
\begin{theorem}\label{thm2}
For $x>0$ and $k = \lceil x \rceil$ we have
\begin{equation}
  \label{taskas}
\mathbb{P}\left\{S_n = x\right\}\leq \mathbb{P}\left\{W_m = k\right\},
\end{equation}
where 
\begin{equation*}
m=\left\{\begin{gathered}
\min\left\{n,k^2 \right\},\;\;\,  \qquad\text{if}\,\,\, n+k \in 2\Z, \hfill \\
\min\left\{n-1,k^2\right\},\quad \text{if}\,\,\, n+k \in 2\Z+1.\\
\end{gathered} \right.
\end{equation*}
\end{theorem}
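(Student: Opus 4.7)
The plan is to reduce the problem to weighted Rademacher sums and then analyze the point probability via a combinatorial fibre argument, treating the regimes $n \le k^2$ and $n > k^2$ separately. By first conditioning on $(|X_1|,\dots,|X_n|)$ and using the symmetry of each $X_i$, the problem reduces to bounding $\mathbb{P}\{\sum_i a_i \varepsilon_i = x\}$ with fixed $a_i \in [0,1]$ and i.i.d.\ Rademacher $\varepsilon_i$; coordinates with $a_i=0$ are discarded, so we may assume $a_i>0$.

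For $n \le k^2$, the target is $\mathbb{P}\{S_n=x\}\le \mathbb{P}\{W_n=k\}$, with an $(n-1)$-variant in the odd-parity case. The key object is the fibre $F=\{\varepsilon : \sum a_i\varepsilon_i=x\}\subset\{\pm 1\}^n$, which is an antichain in the coordinatewise order since all $a_i>0$ (any $\varepsilon\prec\varepsilon'$ would force $\sum a_i\varepsilon_i < \sum a_i\varepsilon'_i$ strictly). The classical Sperner / Littlewood--Offord bound gives only $|F|\le\binom{n}{\lfloor n/2\rfloor}$, whereas we need the stronger $\binom{n}{(n+k)/2}$. The extra input must be the positivity of $x$: since $a_i\le1$, every $\varepsilon\in F$ satisfies $|\{i:\varepsilon_i=+1\}|\ge\tfrac12(\sum_i a_i+x)$, so when $\sum a_i$ is close to $n$ the fibre is confined to levels $\ge(n+k)/2$ and an LYM estimate finishes the proof. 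For $n > k^2$, the bound caps at $\mathbb{P}\{W_{k^2}=k\}$; once the previous regime is settled, this should follow from unimodality of $m\mapsto \binom{m}{(m+k)/2}/2^m$ along arithmetic progressions of the correct parity, peaking near $m=k^2$, which is a routine binomial calculation.

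The main obstacle is the strengthening of the antichain bound from $\binom{n}{\lfloor n/2\rfloor}$ to $\binom{n}{(n+k)/2}$ when $\sum a_i$ is significantly smaller than $n$: there the positivity inequality $|\{i:\varepsilon_i=+1\}|\ge\tfrac12(\sum a_i+x)$ no longer confines $F$ to high enough levels, and the LYM argument breaks down. The situation is delicate because the extremal configurations are \emph{not} restricted to $a_i\in\{0,1\}$: for $n=4$, $x=3/2$, $k=2$, the choice $a_i\equiv 3/4$ gives $\mathbb{P}\{S_4=3/2\}=\mathbb{P}\{W_4=2\}=1/4$, achieving equality. Thus a naive smoothing pushing the $a_i$'s to the endpoints of $[0,1]$ cannot work, and the proof must simultaneously accommodate both $\{0,1\}$-valued and constant-weight extremal walks. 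I expect the cleanest route to combine Theorem~\ref{thm1}, applied at the values $x$ and $x+2$, with a monotone rearrangement of $(a_i)$ that preserves the antichain structure while only enlarging the point mass; the non-integer case $k>x$, which forces at least one extremal $a_i$ to be non-integer, is where the argument is most intricate.
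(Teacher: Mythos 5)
Your reduction to weighted Rademacher sums, your observation that the fibre $F=\{\varepsilon:\sum a_i\varepsilon_i=x\}$ is an antichain, and your unimodality endgame (peak of $2^{-j}\binom{j}{(k+j)/2}$ at $j=k^2$ along the correct parity class) all match the paper. But the central estimate --- improving the Sperner bound $\binom{n}{\lfloor n/2\rfloor}$ to $\binom{n}{\lceil (n+k)/2\rceil}$ --- is exactly what you leave unproven, and your two candidate repairs do not close it. The level-confinement/LYM argument, as you concede, only works when $\sum_i a_i$ is close to $n$. The fallback you sketch, combining Theorem \ref{thm1} at $x$ and $x+2$, cannot work either: writing $\prob{S_n=x}=\prob{S_n\geq x}-\prob{S_n>x}$ would require a \emph{lower} bound on the upper tail of the specific sum $S_n$ at $x+2$, while Theorem \ref{thm1} supplies only upper bounds; no rearrangement of the $a_i$ fixes this directionality problem.

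The missing idea is that positivity of $x$ gives much more than confinement to high levels: identifying each sign vector in $F$ with its set $A$ of $+1$ coordinates and writing $\sigma_A=\sum_{i\in A}a_i$, any two members $A,B$ of the fibre satisfy
\begin{equation*}
s_A+s_B \;=\; 2\left(\sigma_{A\cap B}-\sigma_{A^c\cap B^c}\right)\;\leq\; 2\,\sigma_{A\cap B}\;\leq\; 2\,|A\cap B|,
\end{equation*}
so $|A\cap B|\geq x> k-1$, i.e.\ the fibre is a $k$-\emph{intersecting} antichain --- uniformly in the $a_i$, with no assumption on $\sum_i a_i$. Milner's theorem (Theorem \ref{thm4}) then gives $|F|\leq\binom{n}{\lceil (n+k)/2\rceil}$, which after the parity bookkeeping ($\prob{W_n=k}$ when $n+k$ is even, $\prob{W_n=k+1}\leq\prob{W_{n-1}=k}$ when odd, via $\prob{W_j=k}\geq\prob{W_{j+1}=k+1}$) is precisely the bound you need; your equality example $a_i\equiv 3/4$ is consistent with this, since Milner's bound is attained there. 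This is the paper's second proof (Lemma \ref{lem1}). The paper's first proof avoids extremal set theory altogether: it inducts on $n$ with a case analysis on whether $x$ is an integer and whether $x/a_n\geq k$ (rescaling by $k/x$ or by $1/a_n$), using the Littlewood--Offord bound of Theorem \ref{LO} --- itself proved by a short induction --- for the base case $x\in(0,1]$ (Lemma \ref{apelsinas}). Either route would complete your outline; as it stands, the proposal identifies the difficulty accurately but does not resolve it.
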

\begin{flushleft}
  Equality in \eqref{taskas} is attained for $S_n=\frac{x}{k}\,W_m$.\\
\end{flushleft}
We provide two different proofs for both inequalities. The first approach is based on induction on the number of random variables (\S \ref{sec2}). To prove Theorem \ref{thm2} we also need the solution of the Littlewood-Offord problem. 

\begin{theorem}
  \label{LO}
Let $a_{1},\ldots,a_{n}$ be real numbers such that $\left|a_{i}\right|\geq 1$. We have 
\begin{equation*}
\max_{x\in \R}\mathbb{P}\left\{S_{n}\in (x-k,x+k]\right\} \leq \mathbb{P}\left\{W_{n}\in (-k,k]\right\}.
\end{equation*}
\end{theorem}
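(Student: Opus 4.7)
The plan is to reduce to the case $a_i \geq 1$ for all $i$ by absorbing $\operatorname{sgn}(a_i)$ into the Rademacher variable $\eps_i$, and then to write $T_n := \sum_{i=1}^n a_i \eps_i$. Unimodality of the central binomial coefficients gives $\max_x \P\{W_n \in (x-k, x+k]\} = \P\{W_n \in (-k, k]\}$, so the right-hand side of the theorem is already the largest possible concentration of $W_n$ at scale $2k$, and the case $a_1 = \cdots = a_n = 1$ saturates the target inequality; the task is to show that no other choice of $a_i \geq 1$ can improve upon it.

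The key combinatorial observation is that along any chain $\eps^{(1)} < \eps^{(2)} < \cdots < \eps^{(\ell)}$ in the coordinatewise Boolean order on $\{-1, +1\}^n$, the values $T_n(\eps^{(j)})$ are strictly increasing with consecutive gaps $T_n(\eps^{(j+1)}) - T_n(\eps^{(j)}) = 2 a_{i_j} \geq 2$, where $i_j$ is the unique coordinate flipped between $\eps^{(j)}$ and $\eps^{(j+1)}$. Consequently at most $\min(\ell, k)$ of these $T_n$-values can lie in any half-open interval of length $2k$: the bound $\ell$ is trivial, while the bound $k$ is forced by the spacing constraint, since consecutive $T_n$-values differ by at least $2$ and therefore an interval of length $2k$ can hold at most $k$ of them.

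The plan is then to fix a symmetric chain decomposition of $\{-1, +1\}^n$ and sum the chainwise contributions to obtain
\[
\bigl|\{\eps : T_n(\eps) \in (x-k, x+k]\}\bigr| \leq \sum_{\text{chain } C} \min(\ell_C, k).
\]
In the special case $T_n = W_n$, a direct computation shows that along a symmetric chain of length $\ell_C$—whose $W_n$-values form the centred arithmetic progression $-\ell_C+1, -\ell_C+3, \ldots, \ell_C-1$—exactly $\min(\ell_C, k)$ of these values lie in the centred half-open interval $(-k, k]$, so the right-hand side above equals $|\{\eps : W_n(\eps) \in (-k, k]\}|$. Dividing both sides by $2^n$ yields the theorem. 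The main obstacle I anticipate is the clean bookkeeping in the symmetric-chain accounting and, in particular, the parity analysis confirming that the count of lattice points of $\{-\ell_C+1, -\ell_C+3, \ldots, \ell_C-1\}$ that fall in $(-k, k]$ is exactly $\min(\ell_C, k)$; once this is established, the argument avoids Sperner's theorem entirely, because it uses the full chain structure (both lengths and levels) rather than merely the antichain bound.
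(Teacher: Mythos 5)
Your proof is correct, but it takes a genuinely different route from the paper. The paper's proof is a four-line induction on $n$: after normalizing $a_n = 1$ it conditions on $\eps_n$, which produces two intervals of length $2k$ shifted by $\pm 1$; the key trick is to rearrange these into one interval of length $2k+2$ and one of length $2k-2$, apply the induction hypothesis with parameters $k+1$ and $k-1$ respectively, and recombine the two centred intervals into $\prob{W_n \in (-k,k]}$. You instead run a Kleitman-style argument through a symmetric chain decomposition of $\{-1,+1\}^n$ (de Bruijn--Tengbergen--Kruyswijk): the per-chain spacing bound is sound --- if $m$ values with consecutive gaps $\geq 2$ lie in $(x-k, x+k]$, their span is at least $2(m-1)$ and strictly less than $2k$, so $m \leq k$; note that the half-openness of the interval is genuinely needed here, since a closed interval of length $2k$ holds $k+1$ points spaced exactly $2$ apart --- and your parity bookkeeping for the $W_n$ side does check out: a symmetric chain with $\ell$ elements has $W_n$-values $-\ell+1, -\ell+3, \dots, \ell-1$, and since $(-k,k]$ contains exactly $k$ integers of each parity class, the chain contributes exactly $\min(\ell, k)$ of them, whether $\ell \leq k$ or $\ell > k$. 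Summing over the chains of the decomposition and dividing by $2^n$ gives the theorem. What the comparison buys: your argument is structurally sharper (it localizes the inequality to each chain, and only the exact count for $W_n$ uses the symmetry of the chains; the upper bound $\min(\ell_C, k)$ holds for any saturated chain), but it imports the existence of a symmetric chain decomposition as external machinery, whereas the paper's induction is entirely self-contained and shorter --- which matters to the authors, whose stated point is that the Littlewood--Offord theorem admits a proof with no combinatorial apparatus at all, Sperner or otherwise. One small remark: your opening appeal to unimodality to identify $\max_x \prob{W_n \in (x-k,x+k]}$ with $\prob{W_n \in (-k,k]}$ is true but unnecessary, since the theorem as stated only requires the centred interval on the right-hand side, and your chain computation establishes that value directly.
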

\noindent That is, the number of the choices of signs for which $S_{n}$ lies in an interval of length $2k$ does not exceed the sum of $k$ largest binomial coefficients in $n$.

Theorem \ref{LO} was first proved by Erd\H{o}s \cite{erdos} using Sperner's Theorem. We give a very short solution which seems to be shorter than the original proof by Erd\H{o}s. We only use induction on $n$ and do not use Sperner's Theorem.

Surprisingly, Theorems \ref{thm1} and \ref{thm2} can also be proved by applying results from extremal combinatorics (\S \ref{sec3}). Namely, we use the bounds for the size of intersecting families of sets (hypergraphs) by Katona \cite{katona} and Milner \cite{milner}.  

 Using a strengthening of Katona's result by Kleitman \cite{kleitman}, we extend Theorem \ref{thm1} to odd 1-Lipschitz functions rather than just sums of the random variables $X_i$ (\S \ref{sec4}). It is important to note that the bound of Theorem \ref{thm1} cannot be true for all Lipschitz functions since the extremal case is not provided by odd functions (for the description of the extremal Lipschitz functions defined on general probability metric spaces see Dzindzalieta \cite{dzindzalieta}).

\section{Proofs by induction on dimension}\label{sec2}
We will first show that it is enough to prove Theorems \ref{thm1} and \ref{thm2} in case when $S_n$ is a linear combination of independent Rademacher random variables $\varepsilon_{i}$ with coefficients $\left|a_{i}\right|\leq 1$. 
 \begin{lemma}\label{reduction}
 Let $g : \R^n \to \R$ be a bounded  measurable function. Then we have 
  $$\sup_{X_1, \dots, X_n}\E g(X_1, \dots, X_n) = \sup_{a_1, \dots, a_n} \E g(a_1 \eps_1, \dots a_n \eps_n),$$
  where the supremum on the left-hand side is taken over symmetric independent random variables $X_1, \dots, X_n$ such that $|X_i| \leq 1$ and the supremum on the right-hand side is taken over numbers  $-1 \leq a_1, \dots, a_n \leq 1$.
\end{lemma}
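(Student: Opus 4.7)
\textbf{Proof proposal for Lemma \ref{reduction}.}

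The inequality $\sup_{a} \E g(a_1\eps_1,\ldots,a_n\eps_n) \le \sup_{X} \E g(X_1,\ldots,X_n)$ is immediate, since for any constants $a_i\in[-1,1]$ the random variables $X_i := a_i\eps_i$ are independent, symmetric and satisfy $|X_i|\le 1$. The content of the lemma is the reverse inequality, and the plan is to show it by a conditioning argument based on a sign/magnitude decomposition of each $X_i$.

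The key observation is that any symmetric random variable $X$ with $|X|\le 1$ admits the representation $X \stackrel{d}{=} |X|\,\eps$, where $\eps$ is a Rademacher variable independent of $|X|$. Indeed, by symmetry $X \stackrel{d}{=} -X$, so conditionally on $|X|=a>0$ the variable $X$ takes the values $\pm a$ with equal probability $1/2$; on $\{|X|=0\}$ we have $X=0$ and the identity holds trivially. Applying this to each coordinate and using the independence of $X_1,\dots,X_n$, we obtain the joint representation
\begin{equation*}
(X_1,\dots,X_n) \stackrel{d}{=} (|X_1|\eps_1,\dots,|X_n|\eps_n),
\end{equation*}
where $\eps_1,\dots,\eps_n$ are i.i.d.\ Rademacher, independent of $(|X_1|,\dots,|X_n|)$.

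With this in hand, conditioning on $A:=(|X_1|,\dots,|X_n|)$ gives
\begin{equation*}
\E g(X_1,\dots,X_n) = \E\bigl[\,\E[g(|X_1|\eps_1,\dots,|X_n|\eps_n)\mid A]\,\bigr].
\end{equation*}
For every realization $A=(a_1,\dots,a_n)\in[0,1]^n$, the inner conditional expectation equals $\E g(a_1\eps_1,\dots,a_n\eps_n)$, which is bounded above by $\sup_{a} \E g(a_1\eps_1,\dots,a_n\eps_n)$; the supremum may be taken over $[-1,1]^n$ since $\eps_i$ and $-\eps_i$ have the same distribution, so restricting the signs of the $a_i$ is harmless. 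Taking outer expectation preserves this deterministic upper bound, and then taking the supremum over all admissible $(X_1,\dots,X_n)$ on the left completes the proof.

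The only delicate point is the sign/magnitude decomposition, which requires the symmetry assumption in an essential way (and would fail for merely mean-zero variables); boundedness of $g$ is used only to guarantee that all expectations in sight are well defined, so that the conditioning step is legitimate.
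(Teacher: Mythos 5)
Your proof is correct and is essentially the paper's argument: the paper likewise uses symmetry to write $\E g(X_1,\dots,X_n)=\E g(X_1\eps_1,\dots,X_n\eps_n)$ with Rademacher signs independent of the $X_i$, then conditions to bound the inner expectation by the supremum over constant coefficients. The only (cosmetic) difference is that you condition on the magnitudes $(|X_1|,\dots,|X_n|)$ via the sign/magnitude decomposition, while the paper conditions on $(X_1,\dots,X_n)$ themselves, which lands directly in $[-1,1]^n$ and skips your remark about restricting to nonnegative coefficients.
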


{\em Proof}. Define $S = \sup_{a_1, \dots, a_n} \E g(a_1 \eps_1, \dots a_n \eps_n)$. Clearly 
  $$S \leq \sup_{X_1, \dots, X_n}\E g(X_1, \dots, X_n).$$
  By symmetry of $X_1, \dots, X_n$, we have
  $$ \E g(X_1, \dots, X_n) = \E g(X_1\eps_1, \dots, X_n\eps_n).$$
  Therefore
  \begin{align*}
    \E g(X_1, \dots, X_n) &= \E \E[ g(X_1\eps_1, \dots, X_n\eps_n) \,|\, X_1, \dots, X_n] \leq \E S=S. \qquad\endproof
  \end{align*} 
%\end{proof}

Thus, in view of Lemma \ref{reduction} we will henceforth write $S_n$ for the sum $a_1 \eps_1 + \dots + a_n \eps_n$ instead of a sum of arbitrary symmetric random variables $X_i$.

{\em Proof of Theorem \ref{thm1}}. First note that the inequality is true for $x \in (0,1]$ and all $n$. This is due to the fact that $\mathbb{P}\left\{S_n \geq x\right\}\leq 1/2$ by symmetry of $S_n$ and for all $n$ the right-hand side of the inequality is given by the tail of an odd number of random signs, which is exactly $1/2$. 
  %Moreover, when $n = 1$, the inequality in Theorem \ref{thm1} is true for every $x > 0$, since for $x>1$ we have $\mathbb{P}\left\{S_{1}\geq x\right\}=0$.
  We can also assume that the largest coefficient $a_{i}=1$ as otherwise if we scale the sum by $a_{i}$ then the tail of the this new sum would be at least as large as the former. We  will thus assume, without loss of generality, that $0 \leq a_{1}\leq a_{2}\leq \ldots \leq a_{n}=1$. Define a function $\I(x,n)$ to be $1$ if $\lceil x \rceil + n$ is even, and zero otherwise. Then we can rewrite the right-hand side of \eqref{eq1} as
  $$\prob{W_{n-1} + \eps_n\I(x, n)  \geq x},$$
making an agreement $\eps_0 \equiv 0$.

  For $x>1$ 
  %and $n>1$ 
  we argue by induction on $n$. Case $n = 0$ is trivial. Observing that $\I(x-1,n) = \I(x+1,n) = \I(x, n+1)$ we have
  \begin{align*}
    \prob{S_{n+1}\geq x} &= \half \prob{S_{n}\geq x-1} + \half\prob{S_{n}\geq x+1 }\\
    &\leq \half \prob{ W_{n-1}+\varepsilon_{n}\Ind(x-1,n) \geq x-1 } \\
    &+ \half\prob{ W_{n-1}+\varepsilon_{n}\I(x + 1, n) \geq x+1 }\\
    &=\prob{ W_{n}+\varepsilon_{n+1}\I(x,n+1)\geq x }. \qquad\endproof
  \end{align*}

  {\em Proof of Theorem \ref{LO}}. We can assume that $a_{1}\geq a_{2}\geq \ldots \geq a_{n} \geq 1$. Without loss of generality we can also take $a_{n}=1$. This is because
  \begin{align*}
    \mathbb{P}\left\{S_{n}\in (x-k,x+k]\right\} &\leq \mathbb{P}\left\{S_{n}/a_{n}\in (x-k,x+k]/a_{n}\right\} \\ 
    &\leq \max_{x\in \R}\mathbb{P}\left\{S_{n}/a_{n}\in (x-k,x+k]\right\}.
  \end{align*}

  The claim is trivial for $n=0$. Let us assume that we have proved the statement for $1,2,...,n-1$. Then
  \begin{align*}
    \mathbb{P}&\left\{S_{n}\in (x-k,x+k]\right\}\\
    =& \half\mathbb{P}\left\{S_{n-1}\in (x-k-1,x+k-1]\right\}+\half\mathbb{P}\left\{S_{n-1}\in (x-k+1,x+k+1]\right\}\\
    =&\half\mathbb{P}\left\{S_{n-1}\in (x-k-1,x+k+1]\right\}+\half\mathbb{P}\left\{S_{n-1}\in (x-k+1,x+k-1]\right\} \\
    \leq& \half\mathbb{P}\left\{W_{n-1}\in (-k-1,k+1]\right\}+\half\mathbb{P}\left\{W_{n-1}\in (-k+1,k-1]\right\}\\
    =& \half\mathbb{P}\left\{W_{n-1}\in (-k-1,k-1]\right\}+\half\mathbb{P}\left\{W_{n-1}\in (-k+1,k+1]\right\}\\
    =&\mathbb{P}\left\{W_{n}\in (-k,k]\right\}. \qquad \endproof
  \end{align*}

\begin{flushleft}
Note that we rearranged the intervals after the second equality so as to have two intervals of different lengths and this makes the proof work.
\end{flushleft}

Before proving Theorem \ref{thm2}, we will obtain an upper bound for $\prob{S_n = x}$ under an additional condition that all $a_i$ are nonzero.

\begin{lemma}\label{apelsinas}
  Let $x >0$, $k= \lceil x \rceil$. Suppose that $0 < a_1 \leq \dots \leq a_n \leq 1$. Then
  \begin{equation}\label{nialygybe}
    \prob{S_n = x} \leq 
      \begin{cases}
        \prob{W_n = k}, \quad &\text{\rm if} \quad n+k \in 2\Z, \\
        \prob{W_{n-1} = k}, \quad &\text{\rm if} \quad n+k \in 2\Z + 1.
      \end{cases}
  \end{equation}
\end{lemma}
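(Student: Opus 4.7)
My plan is to prove the lemma by induction on $n$, after two preparatory reductions. Write $B(n,k)$ for the right-hand side of \eqref{nialygybe}, and extend this to $k=0$ by $B(n,0):=\prob{W_n=0}$ if $n$ is even and $B(n,0):=\prob{W_{n-1}=0}$ if $n$ is odd. The proof will rely on two properties of $B$: the Pascal-type recursion
\[
B(n,k)=\tfrac12\bigl[B(n-1,k-1)+B(n-1,k+1)\bigr],\qquad k\ge 1,
\]
(extended to $k=0$ by the convention $B(n-1,-1):=B(n-1,1)$), and the monotonicity $B(n,0)\ge B(n,1)\ge B(n,2)\ge\cdots$. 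Both are immediate from the standard identities for binomial coefficients.

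The first reduction is scaling. Replacing $S_n$ by $S_n/a_n$ and $x$ by $x/a_n$ keeps the coefficients in $(0,1]$, makes the largest one equal to $1$, and can only increase the ceiling $k=\lceil x\rceil$; by monotonicity of $B$ it therefore suffices to treat $a_n=1$. The second is to strengthen the conclusion to all $y\in\R$, namely $\prob{S_n=y}\le B(n,\lceil|y|\rceil)$; the case $y<0$ is just symmetry of $S_n$, while the case $y=0$ becomes a new input to the induction. With $a_n=1$, conditioning on $\eps_n$ gives
\[
\prob{S_n=y}=\tfrac12\prob{S_{n-1}=y-1}+\tfrac12\prob{S_{n-1}=y+1},
\]
and the inductive hypothesis (combined with the symmetry of $S_{n-1}$) bounds this by $\tfrac12\bigl[B(n-1,\lceil|y-1|\rceil)+B(n-1,\lceil|y+1|\rceil)\bigr]$.

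A short case split on $y\ge 0$ finishes the job. If $y\ge 1$, then the two ceilings are exactly $k-1$ and $k+1$ with $k=\lceil y\rceil$, and the bound becomes the recursion for $B$ with equality. If $0<y<1$, then the ceilings are $1$ and $2$, and the bound follows from the monotonicity $B(n-1,1)\le B(n-1,0)$. Finally, if $y=0$, then both ceilings equal $1$ and one needs $B(n-1,1)\le B(n,0)$, which is the $k=0$ case of the recursion. The main obstacle is precisely this $y=0$ boundary: without strengthening the conclusion, the inductive step at $y=1$ would require bounding $\prob{S_{n-1}=0}$, which the original lemma does not cover. Extending the statement to $y\ge 0$ (or, alternatively, invoking Theorem~\ref{LO} once to supply this single bound) closes the gap, and the reduction to $a_n=1$ is what makes the ceilings of $y\pm a_n$ land at integer shifts of $k$ so that the Pascal recursion for $B$ can absorb them.
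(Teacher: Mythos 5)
Your proof is correct, and while it runs on the same engine as the paper's --- induction on $n$, conditioning on the last sign, and the monotonicity, interval-constancy, and Pascal-recursion properties of the extremal bound $B$ --- it handles the boundary in a genuinely different way. The paper keeps the claim restricted to $x>0$, anchors the induction on the range $x\in(0,1]$ by invoking the Littlewood-Offord bound (Theorem \ref{LO}), and for non-integer $x$ splits into two subcases: if $x/a_n\ge k$ it rescales by $k/x$ to land on the integer case, and only if $x/a_n<k$ does it normalize $a_n=1$. You instead normalize $a_n=1$ once and for all (correctly justified by monotonicity of $B$, since dividing by $a_n$ can only increase the ceiling) and strengthen the inductive claim to all $y\in\R$; this removes both the appeal to Theorem \ref{LO} and the two-subcase analysis, making the lemma self-contained, and your diagnosis that $y=0$ is the real obstacle is exactly why the paper imports Theorem \ref{LO} for its base range. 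The price is that you need the recursion and the monotonicity of $B$ at $k=0$ and $k=1$, beyond the range $k\ge 2$ in which the paper uses them (its property \eqref{trecia}); these extensions do hold, but since this is precisely where the parity bookkeeping is delicate they merit a line of verification: for even $n$, $B(n,0)=2^{-n}\binom{n}{n/2}=2^{1-n}\binom{n-1}{n/2}=B(n-1,1)$ because $\binom{n-1}{n/2-1}=\binom{n-1}{n/2}$, while for odd $n$, $B(n,0)=2^{1-n}\binom{n-1}{(n-1)/2}=2^{2-n}\binom{n-2}{(n-1)/2}=B(n-1,1)$ because $\binom{n-2}{(n-3)/2}=\binom{n-2}{(n-1)/2}$; the $k=1$ recursion checks out the same way. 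Two small finishing touches: state the trivial base case ($n=0$ or $n=1$) explicitly, and note that handling $y<0$ uses the symmetry of $S_{n-1}$, which is available since the steps are symmetric.
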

{\em Proof}. We first prove the lemma for $x \in (0, 1]$ and any $n$.
  By Theorem \ref{LO} we have
  \begin{equation}\label{LO1}
    \prob{S_n = x} \leq 2^{-n} \binom{n}{\lceil n / 2 \rceil}.
  \end{equation}
  On the other hand, if $x \in (0, 1]$, then $k = 1$ and
  \begin{equation*}
    2^{-n} \binom{n}{\lceil n / 2 \rceil} = \left \{
      \begin{gathered}
        2^{-n} \binom{n}{(n + 1) / 2} = \prob{W_n = 1}, \quad \,\, \text{if} \quad n + 1 \in 2\Z, \hfill \\
	2^{-n} \binom{n}{n / 2} = \prob{W_{n-1} = 1}, \quad \qquad \text{if} \quad n + 1 \in 2\Z + 1, \hfill
      \end{gathered} \right.
  \end{equation*}
  where the second equality follows by Pascal's identity:
  $$2^{-n} \binom{n}{n/2} = 2^{-n} \left[ \binom{n-1}{n/2} + \binom{n-1}{n/2 - 1} \right]= 2^{1-n} \binom{n-1}{n/2} = \prob{W_{n-1} = 1}.$$
  Let $\N = \{1, 2, \dots \}$ stand for the set of positive integers. Let us write $B_n(x)$ for the right-hand side of \eqref{nialygybe}. Note that it has the following properties:
  \begin{align}
    &x \mapsto B_n(x) \text{ is non-increasing}; \label{pirma} \\
    &x \mapsto B_n(x) \text{ is constant on each of the intervals } (k-1, k], \quad k \in \N \label{antra}; \\
    &B_n (k) = \half B_{n-1}(k-1) + \half B_{n-1}(k+1), \quad \text{if } k = 2, 3, \dots. \label{trecia}
  \end{align}

  We proceed by induction on $n$. The case $n=1$ is trivial. To prove the induction step for $n \geq 2$, we consider two cases: (i) $x = k \in \N$; (ii) $k - 1 < x < k \in \N$.
  
  Case (i). For $k = 1$ the lemma has been proved, so we assume that $k \geq 2$. By the inductional hypothesis we have 
  \begin{align}
    \prob{S_n = k} &= \half \prob{S_{n-1} = k - a_n} + \half \prob{S_{n-1} = k + a_n} \nonumber \\
                   &\leq \half B_{n-1}(k - a_n) + \half B_{n-1}(k + a_n). \label{aaa}
  \end{align}
  By \eqref{pirma} we have
  \begin{equation}\label{bbb}
    B_{n-1}(k - a_n) \leq B_{n-1}(k - 1),
  \end{equation}
  and by \eqref{antra} we have
  \begin{equation}\label{ccc}
    B_{n-1}(k + a_n	) = B_{n-1}(k + 1).
  \end{equation}
  Combining \eqref{aaa}, \eqref{bbb}, \eqref{ccc}, and \eqref{trecia}, we obtain
  \begin{equation}\label{ddd}
    \prob{S_n = k} \leq B_n(k).  
  \end{equation}

  Case (ii). For $x \in (0, 1]$ Lemma has been proved, so we assume $k \geq 2$. Consider two cases: (iii) $ x/ a_n \geq k$; (iv) $x/ a_n < k$.

  Case (iii). Define $S_n' = a_1' \eps_1 + \dots + a_n' \eps_n$, where $a_i' = {k a_i}/{x}$, so that $S_n' = \frac k x S_n$. Recall that $a_n = \max_i a_i$, by the hypothesis of Lemma. Then $a_i' \leq ka_n/x$ and the assumption $x/a_n \geq k$ imply that $0 < a_1', \dots, a_n' \leq 1$. Therefore, by \eqref{ddd} and \eqref{antra} we have
  \begin{equation*}\label{eee}
    \prob{S_n = x} = \prob{S_n' = k} \leq B_n(k) = B_n(x).
  \end{equation*}

  Case (iv). Without loss of generality, we can assume that $a_n = 1$, since 
%$$\prob{S_n = x} = \prob{\frac{a_1}{a_n}\eps_1 + \dots + \frac{a_n}{a_n}\eps_n = \frac x {a_n}}$$
$$\prob{S_n = x} = \prob{\frac{a_1}{a_n}\eps_1 + \dots + \frac{a_n}{a_n}\eps_n = \frac{x}{a_n}}$$
  and $k - 1 < x/a_n < k$, by the assumption of the present case. Sequentially applying the induction hypothesis, \eqref{antra}, \eqref{trecia}, and again \eqref{antra}, we get
  \begin{align*}
    \prob{S_n = x} &= \half \prob{S_{n-1} = x - 1} + \half \prob{S_{n-1} = x + 1} \\
		   &\leq \half B_{n-1}(x - 1) + \half B_{n-1}(x + 1) \\
		   &= \half B_{n-1}(k - 1) + \half B_{n-1}(k + 1) \\
		   &= B_n(k) = B_n(x). \qquad\endproof
  \end{align*}

  {\em Proof of Theorem \ref{thm2}}. Writing $B_n(k)$ for the right-hand side of \eqref{nialygybe}, we have, by Lemma \ref{apelsinas}, that 
\[ \prob{S_n = x} \leq  \max_{ j = k}^n B_j(k).\]
If $j + k \in 2\Z$, then $B_j(k) = \prob{W_j = k} = B_{j+1}(k)$ and therefore
\begin{equation}\label{bananas}
  \max_{ j = k}^n B_j(k) = \max_{\substack{k \leq j \leq n \\ k + j \in 2\Z}} \prob{W_j = k}. 
\end{equation}
To finish the proof, note that the sequence $\prob{ W_j = k } = 2^{-j}\binom{j}{(k+j)/2}$, $j = k, k+2, k+4, \dots$ is unimodal with a peak at $j=k^2$, i.e.,
\[ \prob{W_{j-2} = k} \leq \prob{W_j = k}, \quad \text{if} \quad j \leq k^2,\]
and
\[ \prob{W_{j-2} = k} > \prob{W_j = k}, \quad \text{if} \quad j > k^2. \]
Indeed, elementary calculations yield that the inequality 
\[ 2^{-j+2} \binom{j-2}{(k+j)/2 - 1} \leq 2^{-j} \binom{j}{(k+j)/2}, \qquad j \geq k + 2, \]
is equivalent to the inequality $j \leq k^2$. \qquad\endproof

\section{Proofs based on results in extremal combinatorics}\label{sec3}

Let $[n]$ stand for the finite set $\{1, 2, \dots, n\}$. Consider a family $\mathcal{F}$ of subsets of $\left[n\right]$. We denote by $\left|\mathcal{F}\right|$ the cardinality of $\mathcal{F}$. The family $\mathcal{F}$ is called:
 
 \begin{romannum}
   \item \textbf{$k$-intersecting} if for all $A,B \in \mathcal{F}$ we have $\left|A\cap B\right|\geq k$.
 \item an \textbf{antichain} if for all $A,B \in \mathcal{F}$ we have $A \nsubseteq B$.
   \end{romannum}
\
 A well known result by Katona \cite{katona} (see also \cite[p. 98, Theorem 4]{bollobas}) gives the exact upper bound for a $k$-intersecting family.
 \begin{theorem}[Katona \cite{katona}] \label{thm3} If $k \geq 1$ and $\mathcal{F}$ is a $k$-intersecting family of subsets of $\left[n\right]$ then
\begin{equation}\label{Kat}
\left|\mathcal{F}\right|\leq \left\{ 
\begin{gathered}
  \sum_{j=t}^{n} \binom{n}{j}, \qquad\qquad\qquad  \text{\rm if}\;\; k+n = 2t, \hfill \\
  \sum_{j=t}^{n} \binom{n}{j}+\binom{n-1}{t-1}, \quad \text{\rm if}\;\; k+n = 2t - 1. \hfill \\
\end{gathered} \right.
\end{equation}
\end{theorem}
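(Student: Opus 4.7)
The plan is to apply the compression (``shifting'') technique of Erd\H{o}s--Ko--Rado. For each pair $1 \leq i < j \leq n$, define the shift $S_{ij}$ on a family $\mathcal{F} \subseteq 2^{[n]}$ by sending each $F \in \mathcal{F}$ with $j \in F$, $i \notin F$ to $(F \setminus \{j\}) \cup \{i\}$ provided the latter set is not already in $\mathcal{F}$, and leaving $F$ unchanged otherwise. Two standard facts need to be checked: $|S_{ij}(\mathcal{F})| = |\mathcal{F}|$ (immediate) and the $k$-intersecting property is preserved (a short case analysis on which of two chosen sets was actually moved by the shift). Each non-trivial application of some $S_{ij}$ strictly decreases the non-negative integer $\sum_{F \in \mathcal{F}} \sum_{\ell \in F} \ell$, so iterating the procedure terminates at a \emph{left-compressed} $k$-intersecting family of the same cardinality. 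It therefore suffices to prove \eqref{Kat} under the extra hypothesis that $\mathcal{F}$ is left-compressed.

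For the compressed case I would induct on $n$, allowing $k$ to vary. The base cases $n = k$ (forcing $\mathcal{F} = \{[n]\}$) and $k = 1$ (a $1$-intersecting family cannot contain both a set and its complement, hence has size at most $2^{n-1}$) are verified directly. For the inductive step with $n > k \geq 2$, split $\mathcal{F} = \mathcal{F}_0 \sqcup \mathcal{F}_1$ according to whether $n \notin F$ or $n \in F$, and set $\mathcal{G}_1 = \{F \setminus \{n\} : F \in \mathcal{F}_1\} \subseteq 2^{[n-1]}$. Then $\mathcal{F}_0$ is $k$-intersecting and $\mathcal{G}_1$ is $(k-1)$-intersecting in $[n-1]$, so the inductive hypothesis bounds $|\mathcal{F}_0|$ and $|\mathcal{G}_1|$ via \eqref{Kat} applied to $(n-1,k)$ and $(n-1,k-1)$. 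Combining these two bounds with Pascal's identity yields a candidate upper bound for $|\mathcal{F}| = |\mathcal{F}_0| + |\mathcal{G}_1|$.

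The main obstacle --- and the only place where left-compressedness is essential --- is that this candidate bound typically overshoots \eqref{Kat} by a Pascal-type correction of the form $\binom{n-2}{t-1}$ for one of the parities of $n + k$. To absorb this excess I would exploit the structural consequence of left-compressedness that $G \cup \{i\} \in \mathcal{F}_0$ whenever $G \in \mathcal{G}_1 \setminus \{[n-1]\}$ and $i \in [n-1] \setminus G$ (obtained by applying the shift $S_{i,n}$ to the set $G \cup \{n\} \in \mathcal{F}$). This forces a quantifiable overlap between $\mathcal{F}_0$ and the up-shadow of $\mathcal{G}_1$, from which one extracts a sharpening of one of the two inductive bounds in the problematic parity --- equivalently, one shows that the extremal configurations achieving the induction bound on $\mathcal{F}_0$ and on $\mathcal{G}_1$ cannot be realized simultaneously under the compression constraint. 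Making this compatibility precise so as to recover exactly a saving of $\binom{n-2}{t-1}$ is the technical core of the argument; everything else is bookkeeping with binomial identities.
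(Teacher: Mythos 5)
The paper itself does not prove Theorem \ref{thm3}: it quotes Katona's result (citing \cite{katona} and \cite{bollobas}) and later remarks that it follows from Kleitman's Theorem \ref{thm5}. So your proposal must stand on its own, and its preparatory steps are indeed sound: the shifts $S_{ij}$ preserve cardinality and the $k$-intersecting property, your weight function guarantees termination at a left-compressed family, the base cases $n=k$ and $k=1$ check out against \eqref{Kat}, the split into $\mathcal{F}_0$ and $\mathcal{G}_1$ has the stated intersection parameters, and the compression consequence $G \cup \{i\} \in \mathcal{F}_0$ for $G \in \mathcal{G}_1 \setminus \{[n-1]\}$, $i \in [n-1]\setminus G$ is correct. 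This general route is not hopeless --- shifting proofs of Katona's theorem do exist --- but they hinge on exactly the ingredient you have not supplied.

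That missing ingredient is a genuine gap, not bookkeeping, and you have also mislocated it. Writing $K(n,k)$ for the bound \eqref{Kat}, Pascal's identity gives $K(n-1,k) + K(n-1,k-1) = K(n,k) + \binom{n-2}{t-1}$ when $n+k = 2t$, and $K(n-1,k)+K(n-1,k-1) = K(n,k) + \binom{n-2}{t-2}$ when $n+k = 2t-1$; the naive split therefore overshoots in \emph{both} parities, so the ``saving'' must be extracted at every inductive step, contrary to your claim that only one parity is problematic. Worse, the tool you propose --- the containment of the up-shadow $\partial^{+}\mathcal{G}_1$ in $\mathcal{F}_0$ --- is too weak to produce it: in the extremal family $\mathcal{F} = \{A : |A| \geq t\}$ with $n+k=2t$, the family $\mathcal{G}_1 = \{G \subseteq [n-1] : |G| \geq t-1\}$ attains $K(n-1,k-1)$ exactly, the containment $\partial^{+}\mathcal{G}_1 \subseteq \mathcal{F}_0$ holds with room to spare, and the entire deficit $\binom{n-2}{t-1}$ sits in $\mathcal{F}_0$ alone. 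What is needed is a correlation statement --- that $\mathcal{G}_1$ being near-extremal forces $\mathcal{F}_0$ strictly below its Katona bound --- and the inductive hypothesis \eqref{Kat} applied separately to $\mathcal{F}_0$ and $\mathcal{G}_1$ cannot deliver such stability. You would need to strengthen the induction, for instance by exploiting the cross-intersecting relation $|A \cap G| \geq k$ for all $A \in \mathcal{F}_0$, $G \in \mathcal{G}_1$ (which holds because $n \notin A$, and which your write-up never uses), by invoking Frankl's lemma that in a shifted $k$-intersecting family every $F$ satisfies $|F \cap [k+2i]| \geq k+i$ for some $i \geq 0$, or by proving the stronger diametric theorem of Kleitman (Theorem \ref{thm5}), which the paper notes implies Theorem \ref{thm3}. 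As written, the sentence deferring ``the technical core'' defers precisely the content of the theorem.
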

\noindent Notice that if $k + n = 2t$, then 
\begin{equation}\label{tail1}
  \sum_{j=t}^{n} \binom{n}{j} = 2^n \prob{W_n \geq k}. 
\end{equation}
If $k + n = 2t - 1$, then using the Pascal's identity $\binom n j = \binom{n-1} j + \binom {n-1}{j-1}$ we get
\begin{equation}\label{tail2}
  \sum_{j=t}^{n} \binom{n}{j}+\binom{n-1}{t-1} = 2 \sum_{j = t - 1}^{n-1} \binom {n-1}{j} = 2^n \prob{W_{n-1} \geq k}.
\end{equation}
The exact upper bound for the size of a $k$-intersecting antichain is given by the following result of Milner \cite{milner}.
\begin{theorem}[Milner \cite{milner}]\label{thm4}
If a family ${\cal F}$ of subsets of $[n]$ is a $k$-intersecting antichain, then 
  \begin{equation}\label{Mil}
    |{\cal F}| \leq 
	\binom{n}{t}, \qquad t = \left\lceil \frac {n+k} 2 \right\rceil. 
  \end{equation}
\end{theorem}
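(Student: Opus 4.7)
The plan is to prove Milner's theorem by constructing an injection $\phi\colon\mathcal{F}\hookrightarrow\binom{[n]}{t}$, which suffices since $\bigl|\binom{[n]}{t}\bigr|=\binom{n}{t}$. Sharpness and motivation come for free: since $t=\lceil(n+k)/2\rceil$ satisfies $2t\ge n+k$, any two $t$-subsets of $[n]$ intersect in at least $2t-n\ge k$ elements, so $\binom{[n]}{t}$ itself is a $k$-intersecting antichain of size $\binom{n}{t}$, and any injection of $\mathcal{F}$ into $\binom{[n]}{t}$ immediately yields the claimed inequality.

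For the injection I would invoke Hall's marriage theorem on the bipartite graph $G$ with parts $\mathcal{F}$ and $\binom{[n]}{t}$, where $A\in\mathcal{F}$ is joined to $T\in\binom{[n]}{t}$ iff $A\subseteq T$ (when $|A|\le t$) or $T\subseteq A$ (when $|A|\ge t$). For $\mathcal{S}\subseteq\mathcal{F}$, writing $\mathcal{S}_i=\mathcal{S}\cap\binom{[n]}{i}$, the neighbourhood $N_G(\mathcal{S})\subseteq\binom{[n]}{t}$ is the union of the up-shades $\{T\in\binom{[n]}{t}:T\supseteq A\text{ for some }A\in\mathcal{S}_i\}$ for $i<t$, the middle layer $\mathcal{S}_t$, and the down-shades $\{T\in\binom{[n]}{t}:T\subseteq A\text{ for some }A\in\mathcal{S}_j\}$ for $j>t$. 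The antichain property forces the up-shades from below to be disjoint from $\mathcal{S}_t$ and from all down-shades from above, since any common $T$ would produce a forbidden chain $A\subsetneq T\subsetneq B$ with $A,B\in\mathcal{F}$; this already cleanly separates the three kinds of contribution to $N_G(\mathcal{S})$.

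The main obstacle is verifying Hall's condition $|N_G(\mathcal{S})|\ge|\mathcal{S}|$. Kruskal--Katona supplies lower bounds for each individual $t$-level up- or down-shade in terms of the corresponding $|\mathcal{S}_i|$, but same-direction shades at different levels (e.g.\ $\nabla^{(t)}\mathcal{S}_{i_1}\cap\nabla^{(t)}\mathcal{S}_{i_2}$ for two sizes $i_1,i_2<t$) need not be disjoint, so a naive sum overcounts. I would circumvent this by first applying standard shift operators $S_{ij}$ to $\mathcal{F}$, which preserve both the antichain and $k$-intersecting properties, reducing to a left-compressed family whose shades nest cleanly across levels; on such a family the shade sizes add up correctly and the Hall count goes through. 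The careful bookkeeping of the compression together with the level-by-level Kruskal--Katona estimates is the substantive content of the proof.
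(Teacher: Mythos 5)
First, a point of comparison: the paper does not prove this statement at all. Theorem \ref{thm4} is invoked as a black box with a citation to Milner \cite{milner}, so there is no internal proof to match your attempt against; your proposal has to stand on its own merits, and as written it does not.

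There are two genuine gaps, one of which is an outright false step. The false step is the compression reduction: standard shift operators do \emph{not} preserve the antichain property. Concretely, $\mathcal{F}=\{\{2,4\},\{1,2,3\}\}$ is a $1$-intersecting antichain, but the shift $S_{14}$ (replace $4$ by $1$) sends $\{2,4\}$ to $\{1,2\}$ and fixes $\{1,2,3\}$, producing the chain $\{\{1,2\},\{1,2,3\}\}$. So the reduction to a left-compressed family is unavailable, and even granting it, left-compression does not make up-shades from distinct levels $i_1<i_2<t$ disjoint at level $t$, so the claim that ``the shade sizes add up correctly'' is itself unsubstantiated. The second, deeper gap is that your verification of Hall's condition never uses the $k$-intersecting hypothesis: every disjointness observation you make (no $T$ with $A\subsetneq T\subsetneq B$, etc.) uses only the antichain property. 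But the antichain property alone cannot possibly yield Hall's condition, since Sperner permits antichains of size $\binom{n}{\lfloor n/2\rfloor}>\binom{n}{t}$ when $k\geq 1$, for which no injection into $\binom{[n]}{t}$ exists. Quantitatively, for a level $i$ with $n-t<i<t$, the best level-by-level bound of Kruskal--Katona/normalized-matching type is $|\nabla^{(t)}\mathcal{S}_i|\geq |\mathcal{S}_i|\binom{n}{t}\big/\binom{n}{i}$, which is \emph{weaker} than $|\mathcal{S}_i|$ precisely because $\binom{n}{i}>\binom{n}{t}$ in that range; one must exploit the $k$-intersection of $\mathcal{S}_i$ to do better (this is the actual difficulty that Milner's argument, and e.g.\ Katona's intersection--shadow theorem, are designed to overcome). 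Your sketch defers exactly this step as ``careful bookkeeping,'' but it is the entire theorem: what you have is a plausible programme whose two load-bearing steps respectively fail and are missing, not a proof.
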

\noindent Note that we have 
\begin{equation}\label{layer1}
  \binom{n}{t} = 2^n \prob{W_n = k}, \quad \text{if} \quad n + k = 2t,
\end{equation}
and
\begin{equation}\label{layer2}
  \binom{n}{t} =  2^n \prob{W_{n} = k+1}, \quad \text{if} \quad n + k = 2t - 1.
\end{equation}

\begin{flushleft}
  By Lemma \ref{reduction} it is enough to prove Theorems \ref{thm1} and \ref{thm2} for the sums
\end{flushleft}
$$S_n = a_1 \eps_1 + \dots + a_n \eps_n,$$
where $0 \leq a_1, \dots , a_n \leq 1$. Denote as $A^{c}$ the complement of the set $A$.
For each $A \subset [n]$, write $s_A = \sum_{i \in A} a_i - \sum_{i \in A^c} a_i$. We define two families of sets:
$$\mathcal{F}_{\geq x} = \{A \subset [n] : s_A \geq x\}, \quad \text{and} \quad \mathcal{F}_{x} = \{A \subset [n] : s_A = x\}.$$
{\em Proof of Theorem \ref{thm1}}.  We have
  $$ \prob{S_n \geq x} = 2^{-n} |\mathcal{F}_{\geq x}|.$$
  Let $k = \lceil x \rceil$. Since $W_n$ takes only integer values, we have
  $$ \prob{W_n \geq k} = \prob{W_n \geq x} \qquad \text{and} \qquad \prob{W_{n-1} \geq k} = \prob{W_{n-1} \geq x}.$$
  Therefore, in the view of \eqref{Kat}, \eqref{tail1}, and \eqref{tail2}, it is enough to prove that $\mathcal{F}_{\geq x}$ is $k$-intersecting.
  Suppose that there are $A, B \in \mathcal{F}_{\geq x}$ such that $|A \cap B| \leq k - 1$. Writing $\sigma_A = \sum_{i \in A} a_i$, we have
  \begin{equation}\label{one}
    s_A = \sigma_A - \sigma_{A^{c}} =  (\sigma_{A \cap B} - \sigma_{A^c \cap B^{c}}) + (\sigma_{A\cap B^{c}} - \sigma_{A^{c}\cap B})                                                                                                                        \end{equation} 
  and
  \begin{equation}\label{two}
    s_B = \sigma_B - \sigma_{B^{c}} = (\sigma_{A \cap B} - \sigma_{A^{c}\cap B^{c}}) - (\sigma_{A \cap B^{c}} - \sigma_{A^{c}\cap B}). 
  \end{equation} 
  Since 
  $$\sigma_{A \cap B} - \sigma_{A^{c}\cap B^{c}} \leq \sigma_{A \cap B} \leq |A \cap B| \leq k -1 < x,$$
  from \eqref{one} and \eqref{two} we get
  $$\min\{s_A, s_B\} < x,$$ 
  which contradicts the fact $s_A, s_B \geq x$. \qquad \endproof

The following lemma implies Theorem \ref{thm2}. It also gives the optimal bound for $\prob{S_n = x}$ and thus improves Lemma \ref{apelsinas}. 
\begin{lemma}\label{lem1}
  Let $0 < a_1,\dots, a_n \leq 1$ be strictly positive numbers, $x>0$, $k = \lceil x \rceil$. Then
  \begin{equation*}
    \prob{S_n = x} \leq 
      \begin{cases}
        \prob{W_n = k}, \quad &\text{\rm if} \quad n + k \in 2\Z, \\
        \prob{W_n = k + 1}, \quad &\text{\rm if} \quad n + k \in 2\Z + 1.
      \end{cases}
  \end{equation*}
\end{lemma}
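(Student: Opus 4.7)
The plan is to mimic the combinatorial proof of Theorem \ref{thm1} given just above, but now exploit both the antichain and $k$-intersecting properties of $\mathcal{F}_x$ in order to apply Milner's theorem (Theorem \ref{thm4}) instead of Katona's. As in the preceding proof we translate the probability into a counting statement,
\[ \prob{S_n = x} = 2^{-n} |\mathcal{F}_x|, \]
so the goal becomes to bound $|\mathcal{F}_x|$.

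First I would observe that $\mathcal{F}_x$ is an antichain. This is the one place where the strict positivity $a_i > 0$ is used: if $A \subsetneq B$, then
\[ s_B - s_A = 2 \sum_{i \in B \setminus A} a_i > 0, \]
so $s_A$ and $s_B$ cannot both equal $x$. Next, I would verify that $\mathcal{F}_x$ is $k$-intersecting by reusing the identities \eqref{one} and \eqref{two}. Adding them gives
\[ s_A + s_B = 2 (\sigma_{A \cap B} - \sigma_{A^c \cap B^c}) \leq 2 \sigma_{A \cap B} \leq 2 |A \cap B|. \]
If $A, B \in \mathcal{F}_x$ and $|A \cap B| \leq k - 1$, this would give $2x \leq 2(k-1)$, contradicting $x > k - 1 = \lceil x \rceil - 1$. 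Hence every two members of $\mathcal{F}_x$ share at least $k$ elements.

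Having established that $\mathcal{F}_x$ is a $k$-intersecting antichain, Theorem \ref{thm4} yields
\[ |\mathcal{F}_x| \leq \binom{n}{t}, \qquad t = \left\lceil \tfrac{n+k}{2} \right\rceil. \]
Dividing by $2^n$ and invoking identity \eqref{layer1} in the case $n + k = 2t$ and identity \eqref{layer2} in the case $n + k = 2t - 1$ gives exactly the two cases of the claimed bound.

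The only real subtlety is the verification of the $k$-intersecting property in the boundary regime $x = k$, where $2x = 2k$ rather than strictly larger than $2(k-1)$; this is handled because $2x \leq 2(k-1)$ is a strict violation of $x > k - 1$. Beyond that, the argument is essentially a direct application of Milner's theorem once the translation between random-walk events and set families is set up, and the matching of parities via \eqref{layer1}--\eqref{layer2} is routine.
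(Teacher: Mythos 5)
Your proposal is correct and follows essentially the same route as the paper: translate $\prob{S_n = x}$ into $2^{-n}|\mathcal{F}_x|$, show $\mathcal{F}_x$ is a $k$-intersecting antichain, and apply Milner's theorem via \eqref{layer1}--\eqref{layer2}. The only cosmetic difference is that you re-derive the $k$-intersecting property by summing \eqref{one} and \eqref{two}, whereas the paper simply observes $\mathcal{F}_x \subset \mathcal{F}_{\geq x}$ and inherits it from the proof of Theorem \ref{thm1}; both arguments are valid and equivalent in substance.
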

\begin{proof}
  We have
  $$ \prob{S_n = x} = 2^{-n} |\mathcal{F}_{ x}|.$$
  In the view of \eqref{Mil}, \eqref{layer1}, and \eqref{layer2}, it is enough to prove that $\mathcal{F}_{ x}$ is a $k$-intersecting antichain. To see that $\mathcal{F}_{ x}$ is $k$-intersecting it is enough to note that $\mathcal{F}_{ x} \subset \mathcal{F}_{\geq x}$.  To show that $\mathcal{F}_{ x}$ is an antichain is even easier. If $A, B \in \mathcal{F}_{ x}$ and $A \subsetneq B$, then $s_B - s_A$ = $2 \sum_{i \in B \backslash A} a_i > 0$, which contradicts the assumption that $s_B = s_A = x$.
\qquad\end{proof}

{\em Proof of Theorem \ref{thm2}}. Lemma \ref{lem1} gives 
  \[ \prob{S_n = x} \leq  \max_{ j = k}^n \prob{W_j = k + 1 - \mathbb{I}(k,j)},\]
  where again $\mathbb{I}(k,j)=\mathbb{I}\left\{k+j \in 2\Z \right\}$.  Note that if $k + j \in 2\Z$ we have
  \begin{align*} 
    \prob{W_j = k} &\geq  1 / 2  \prob{W_j = k} + 1 / 2 \prob{W_j = k + 2} \\
                   &=\prob{W_{j+1} = k + 1}, \qquad k > 0.
  \end{align*}
  Hence
  \[ \max_{ j = k}^n \prob{ W_j = k + 1- \mathbb{I}(k,j)  } = \max_{\substack{k \leq j \leq n \\ k + j \in 2\Z}} \prob{ W_j = k },\]
  the right-hand side being the same as the one of \eqref{bananas}. Therefore, repeating the argument following \eqref{bananas} we are done.
\qquad\endproof

\section{Extension to Lipschitz functions}
\label{sec4}
One can extend Theorem \ref{thm1} to odd Lipschitz functions taken of $n$ independent random variables. Consider the cube $C_n = [-1,1]^n$ with the $\ell^1$ metric $d$.
We say that a function $f : C_n \to \R$ is $K$-Lipschitz with $K > 0$ if 
\begin{equation}\label{Lip}
|f(x) - f(y)| \leq K d(x,y), \qquad x,y \in C_n.
\end{equation}
We say that a function $f : C_n \to \R$ is odd if $f(-x) = -f(x)$ for all $x \in C_n$. An example of an odd $1$-Lipschitz function is the function mapping a vector to the sum of its coordinates: 
$$f(x_1, \dots, x_n) = x_1 + \dots + x_n.$$
Note that the left-hand side of \eqref{eq1} can be written as $\prob{f(X_1,\dots,X_n) \geq x}$.

As in Theorems \ref{thm1} and \ref{thm2}, the crux of the proof is dealing with two-valued random variables. The optimal bound for a $k$-intersecting family is not sufficient for this case, therefore we use the following generalization of Theorem \ref{thm3} due to Kleitman \cite{kleitman} (see also \cite[p. 102]{bollobas}) which we state slightly reformulated for our convenience. Let us define the diameter of a set family $\mathcal F$ by $\operatorname{diam} \mathcal{F} = \max_{A,B \in \mathcal{F}} |A\bigtriangleup B|$.
\begin{theorem}[Kleitman \cite{kleitman}]\label{thm5}
  If $k \geq 1$ and $\mathcal{F}$ is a family of subsets of $\left[n\right]$ with $\operatorname{diam} \mathcal{F} \leq n - k$, then
\begin{equation}\label{Kle}
\left|\mathcal{F}\right|\leq \left\{ 
\begin{gathered}
  \sum_{j=t}^{n} \binom{n}{j}, \qquad\qquad\qquad  \text{\rm if}\;\; k+n = 2t, \hfill \\
  \sum_{j=t}^{n} \binom{n}{j}+\binom{n-1}{t-1}, \quad \text{\rm if}\;\; k+n = 2t - 1. \hfill \\
\end{gathered} \right.
\end{equation}
\end{theorem}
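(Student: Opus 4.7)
The plan is to follow the compression (shifting) method that Kleitman used for his original proof, which genuinely generalizes Katona's Theorem \ref{thm3}. Note that the $k$-intersecting hypothesis automatically forces diameter at most $n-k$, since $|A \triangle B| = |A \cup B| - |A \cap B| \leq n - |A \cap B| \leq n - k$; so \eqref{Kle} reduces to Katona's bound in the intersecting case, and the novelty to be handled is families having small diameter but with no control on the individual cardinalities of the members.

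First, for each $1 \leq i < j \leq n$, I would introduce the $(i,j)$-compression $S_{ij}$: replace every $A \in \mathcal{F}$ with $j \in A$ and $i \notin A$ by $A' = (A \setminus \{j\}) \cup \{i\}$ whenever $A' \notin \mathcal{F}$. This operation manifestly preserves $|\mathcal{F}|$, and a case analysis on modified versus unmodified pairs shows it does not increase $\operatorname{diam} \mathcal{F}$. Since each application strictly decreases $\sum_{A \in \mathcal{F}} \sum_{x \in A} x$, iterating over all pairs $(i,j)$ terminates in a shifted (left-compressed) family $\mathcal{F}^*$ with $|\mathcal{F}^*| = |\mathcal{F}|$ and $\operatorname{diam} \mathcal{F}^* \leq n-k$.

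Next, I would induct on $n$. Split $\mathcal{F}^* = \mathcal{F}_0 \sqcup \mathcal{F}_1$ according as $n \notin A$ or $n \in A$, and put $\mathcal{G} = \{A \setminus \{n\} : A \in \mathcal{F}_1\}$; both $\mathcal{F}_0$ and $\mathcal{G}$ live in $2^{[n-1]}$. The shifted structure of $\mathcal{F}^*$ converts the diameter constraint, including the cross-pair constraint between $\mathcal{F}_0$ and $\mathcal{F}_1$, into usable diameter bounds on $\mathcal{F}_0$ and $\mathcal{G}$ individually. Applying the inductive hypothesis to each piece and combining via Pascal's identity $\binom{n}{j} = \binom{n-1}{j} + \binom{n-1}{j-1}$ should match the two-case right-hand side of \eqref{Kle}; the base case $n=k$, where diameter $0$ forces $|\mathcal{F}| \leq 1$, closes the induction.

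The main obstacle is the innocuous-sounding claim that $S_{ij}$ does not increase the diameter. The dangerous scenario is when the compression modifies both members of a pair $(A,B)$, since $A' \triangle B'$ could a priori differ from $A \triangle B$ in the coordinates $i$ and $j$. Resolving this requires tracking exactly how $i,j$ migrate between the four cells $A\cap B$, $A\setminus B$, $B\setminus A$, $A^c\cap B^c$, and using the hypothesis $A',B' \notin \mathcal{F}$ (prior to modification) to either rule out the bad configurations or exhibit an already-present pair whose symmetric difference dominates $|A' \triangle B'|$. This case analysis is standard for compression arguments but is where the bulk of the technical work lies.
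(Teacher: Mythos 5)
You should first be aware that the paper does not prove Theorem \ref{thm5} at all: it is quoted from Kleitman \cite{kleitman} (see also \cite[p.~102]{bollobas}) and used as a black box, so your attempt is measured against Kleitman's theorem itself rather than an argument in the paper. The first half of your plan is sound. The exchange $S_{ij}$ preserves $|\mathcal{F}|$ and does not increase the diameter; the only genuinely delicate case is a shifted $A' = (A \setminus \{j\}) \cup \{i\}$ paired with an unshifted $B$ with $j \in B$, $i \notin B$, $(B \setminus \{j\}) \cup \{i\} \in \mathcal{F}$, where one checks $|A' \bigtriangleup B| = |A \bigtriangleup ((B \setminus \{j\}) \cup \{i\})| \leq n-k$. (The case you flag as dangerous, both members modified, is in fact trivial: the symmetric difference is then unchanged.) Your opening observation that $k$-intersecting forces diameter at most $n-k$ is correct and is exactly the paper's own remark after the theorem.

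The gap is the induction step, which you assert precisely where the entire difficulty of the theorem lives. Writing $f(n,k)$ for the right-hand side of \eqref{Kle}, Pascal's identity shows $f(n,k) = f(n-1,k-1) + f(n-1,k+1)$ \emph{with equality} in both parity cases, so there is no slack: your scheme needs the trivial bound $\operatorname{diam} \mathcal{F}_0 \leq n-k$ on one piece and the improvement $\operatorname{diam} \mathcal{G} \leq n-k-2$ on the other (or vice versa), and left-compression does not deliver any such per-piece improvement. The extremal family $\{A : |A| \geq t\}$ (with $n+k = 2t$) is invariant under every $S_{ij}$, yet its link $\mathcal{G} = \{A \subseteq [n-1] : |A| \geq t-1\}$ again has diameter exactly $n-k$; only $\mathcal{F}_0$ improves, so at minimum a case analysis over \emph{which} piece improves is required, and nothing in the sketch supplies it. Worse, neither piece need improve: the family $\{\emptyset, \{1\}, \{3\}, \{1,3\}\} \subseteq 2^{[3]}$ has diameter $2 = n-k$ (so $k=1$) and attains the bound $4$, while $\mathcal{F}_0 = \mathcal{G} = \{\emptyset,\{1\}\}$ both have diameter $1 > n-k-2$. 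Compressing it via $S_{23}$ yields the subcube $\{\emptyset,\{1\},\{2\},\{1,2\}\}$, for which $\mathcal{F}_0$ is a full subcube of size $2^{n-1}$ — the induction then exits the regime $k \geq 1$, where \eqref{Kle} is simply false and must be replaced by $2^{n-1}$. That subcubes sit alongside Hamming balls as extremal configurations in boundary cases is exactly why no uniform per-piece diameter bound can exist, and why Kleitman's actual proof uses a different device (replacing $(\mathcal{F}_0, \mathcal{G})$ by $(\mathcal{F}_0 \cup \mathcal{G}, \mathcal{F}_0 \cap \mathcal{G})$ and treating the even and odd cases in tandem) rather than inheriting a diameter hypothesis on $\mathcal{F}_0$ and $\mathcal{G}$ separately. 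As written, the sentence ``the shifted structure converts the diameter constraint into usable diameter bounds on $\mathcal{F}_0$ and $\mathcal{G}$ individually'' is not a technicality to be checked but the theorem's real content, and it is false in the natural readings exhibited above.
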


To see that Theorem \ref{thm5} implies Theorem \ref{thm3}, observe that $|A \cap B| \geq k$ implies $|A\bigtriangleup B| \leq n - k$.

\begin{theorem}
 Suppose that a function $f : C_n \rightarrow \R$ is $1$-Lipschitz and odd. Let $X_1, \dots, X_n$ be symmetric independent random variables such that $|X_i| \leq 1$. Then, for $x > 0$, we have that
\begin{equation}\label{orangutanas}
  \prob{f(X_1, \dots, X_n) \geq x} \leq 
    \begin{cases} 
      \prob{W_n \geq x}, \quad &\mathrm{if} \quad n + \lceil x \rceil \in 2 \Z, \\ 
      \prob{W_{n-1} \geq x}, \quad &\mathrm{if} \quad n + \lceil x \rceil \in 2 \Z + 1.
    \end{cases}
\end{equation}
\end{theorem}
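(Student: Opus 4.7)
The plan is to follow the same template as the proof of Theorem \ref{thm1} from Section \ref{sec3}, but replace the use of Katona's Theorem \ref{thm3} by Kleitman's diameter version, Theorem \ref{thm5}. First I would invoke Lemma \ref{reduction} with $g(x_1,\dots,x_n) = \mathbb{I}\{f(x_1,\dots,x_n) \geq x\}$ to reduce the problem to bounding $\prob{f(a_1\eps_1,\dots,a_n\eps_n)\geq x}$ over all $-1 \leq a_i \leq 1$. Since $\eps_i$ and $-\eps_i$ are equidistributed, after absorbing signs into $f$ (which remains odd and $1$-Lipschitz since the coordinate flip is an isometry of $C_n$ that preserves oddness) I can assume $0 \leq a_i \leq 1$.

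For each $A \subset [n]$, define $v_A \in C_n$ by $(v_A)_i = a_i$ if $i \in A$ and $(v_A)_i = -a_i$ otherwise, and set
\[
  \mathcal{F} = \{A \subset [n] : f(v_A) \geq x\}.
\]
Then $\prob{f(a_1\eps_1,\dots,a_n\eps_n) \geq x} = 2^{-n}|\mathcal{F}|$. In view of Theorem \ref{thm5} together with the identifications \eqref{tail1} and \eqref{tail2} (and the fact that $\prob{W_n \geq x} = \prob{W_n \geq k}$ for $k = \lceil x \rceil$, since $W_n$ is integer-valued), it suffices to show that $\operatorname{diam} \mathcal{F} \leq n - k$.

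The key step is this diameter bound. Given $A, B \in \mathcal{F}$, oddness of $f$ yields $f(-v_B) = -f(v_B) \leq -x$, so
\[
  2x \leq f(v_A) - f(-v_B) \leq d(v_A, -v_B).
\]
A direct computation gives
\[
  d(v_A, -v_B) = \sum_{i=1}^n a_i |\eps^A_i + \eps^B_i| = 2 \sum_{i \notin A \triangle B} a_i \leq 2(n - |A \triangle B|),
\]
where $\eps^A_i = 1$ if $i \in A$ and $-1$ otherwise. Combining the two displays gives $|A \triangle B| \leq n - x$, and since the left-hand side is an integer, $|A \triangle B| \leq \lfloor n - x \rfloor = n - k$.

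I expect the main subtlety to be bookkeeping rather than a deep obstacle: namely, justifying the reduction to nonnegative $a_i$ (which uses that $f \circ \sigma$ is again odd and $1$-Lipschitz for any coordinate sign-flip $\sigma$), and carefully handling the integer rounding $\lfloor n - x \rfloor = n - \lceil x \rceil$ so that the correct parity case of Theorem \ref{thm5} lines up with the correct branch of \eqref{orangutanas}. Once the diameter estimate is in place, Kleitman's theorem closes the argument exactly as Katona's did in Section \ref{sec3}.
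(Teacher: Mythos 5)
Your proposal is correct and takes essentially the same approach as the paper: reduce via Lemma \ref{reduction}, show $\operatorname{diam} \mathcal{F} \leq n - \lceil x \rceil$ by combining oddness with the Lipschitz property, and conclude with Kleitman's Theorem \ref{thm5} via \eqref{tail1} and \eqref{tail2}. The only cosmetic difference is that the paper absorbs the coefficients $a_i$ into $f$ and argues by contradiction through $B^{c}$, while you keep the $a_i$ and bound $d(v_A, -v_B)$ directly; since $-v_B$ is exactly the vertex corresponding to $B^{c}$, the two estimates are identical.
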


\begin{proof}
  Applying Lemma \ref{reduction} with the function
  $$g(y_1, \dots, y_n) = \I \{ f(y_1, \dots, y_n) \geq x \}, $$
  we can see that it is enough to prove \eqref{orangutanas} with
  $$X_1 = a_1 \eps_1, \dots, X_n = a_n \eps_n$$
  for any $1$-Lipschitz odd function $f$. In fact, we can assume that $a_1 = \dots = a_n =1$, since the function
  $$(x_1, \dots, x_n) \mapsto f(a_1 x_1, \dots, a_n x_n)$$
  is clearly $1$-Lipschitz and odd.
 
  Given $A \subseteq [n]$, write $f_A$ for $f(2 \,\Ind_A(1) - 1, \dots, 2 \,\Ind_A(n) - 1)$, where $\Ind_A$ is the indicator function of the set $A$. Note that
  \begin{equation}\label{lip}
    |f_A - f_B| \leq 2|A \bigtriangleup B|
  \end{equation}
  by the Lipschitz property. Consider the family of finite sets
  $$ \mathcal{F} = \{ A \subseteq [n] : f_A \geq x \},$$
  so that $$\prob{f(\eps_1, \dots, \eps_n) \geq x} = 2^{-n} |\mathcal{F}|.$$
  Write $k = \lceil x \rceil$. Note that $W_{n-1}$ and $W_n$ take only integer values. Therefore by \eqref{tail1} and \eqref{tail2} we see that the right-hand side of \eqref{Kle} is equal, up to the power of two, to the right-hand side of \eqref{orangutanas}. Consequently, if $\operatorname{diam} \mathcal{F} \leq n - k$, then Theorem \ref{thm5}  implies \eqref{orangutanas}.  
  Therefore, it remains to check that for any $A, B \in \mathcal{F}$ we have $|A \bigtriangleup B| \leq n-k$. 
  
  Suppose that for some $A, B$ we have $f_A, f_B \geq x$ but $|A \bigtriangleup B| \geq n-k + 1$. Then
  $$|A \bigtriangleup B^{c}| = |(A \bigtriangleup B)^{c}| = n - |A \bigtriangleup B| \leq k - 1,$$
  and hence by \eqref{lip} we have
  \begin{equation}\label{statement}
  |f_A - f_{B^{c}}| \leq 2k - 2.
  \end{equation}                                                     
  On the other hand we have that $f_{B^{c}} \leq -x$, as $f$ is odd. Therefore
  $$f_A - f_{B^{c}} \geq 2x > 2k - 2,$$
  which contradicts \eqref{statement}.
\qquad\end{proof}

\section*{Acknowledgement}
We would like to thank Paul Balister for careful reading and valuable remarks, which improved the exposition.

\bibliographystyle{siam}

\end{document}